\newtheorem{lemma}{Lemma}
\newtheorem{proposition}{Proposition}
\newtheorem{theorem}{Theorem}
\newtheorem{problem}{Problem}
\newtheorem{definition}{Definition}
\newcommand{\problemname}{P\textsuperscript{w}}
\newcommand{\problemref}[1]{\problemname}
\title{On Existence and Uniqueness of the Solution of a Two-Surfaces Contact Problem Using a Fixed Point Approach}
\author{Abdelkrim Atailia\thanks{Email: \texttt{abdelkrim.atailia@univ-annaba.dz},\texttt{ atailia.karim.001@gmail.com}} \and Frekh Taallah\thanks{Email: \texttt{frekh.taallah@univ-annaba.dz}}}
\date{Laboratory of Mathematical Modeling and Numerical Simulation (LAM²SIN), University of Badji-Mokhtar Annaba, Annaba, Algeria}
\begin{document}
	
	\maketitle

	\begin{abstract} In this work, we give the proof of the existence and uniqueness of the solution to the weak form of a two-surfaces contact problem using fixed point approach. We begin by modeling the evolution of a two deformable surfaces contact problem with a general viscoplastic law, the contact is considered frictionless and governed by the Signorini-type condition with an initial gap. Then, we derive the variational formulation of the classical problem. Finally, we conclude our work by establishing an existence and uniqueness theorem for the weak form.   
		
	\end{abstract}

	\section{Introduction}
	The contact between two surfaces is a phenomenon that occurs at each moment in the physical existence, which can lead to a deformation of at least one surface of two surfaces,depending on the nature of materials that comes to contact. This phenomenon generates a myriad of models depending on the type of material, the applied forces, etc. Mathematically we are able to study only few classes of contact problems due to the complexities that arise with them. In our work, we investigate the existence and uniqueness of a weak solution to a specific class of contact problems, which has never been studied before. We used the fixed point approach rather than the classical method. The advantage of studying this class by using fixed point approach is to set the base for studying its stability in the sense of Tykhonov, and this in turn, allow us to give the stability of a broader class of contact problems. In this classes the stability is not achieved by the classical approach.  Through the Tykhonov method, less conditions on the initial data are enforced. Hence, we are at the point where we are able to predict  the deformation of a boarder class of contact phenomenon and implementing them numerically in engineering and in the simulated world whenever possible.  
	
	The majority of previous works that used the fixed point approach, focused on investigating the cases where a body comes to contact with a rigid fixed obstacle (foundation), as in \textbf{Figure}\ref{figure1}. Less works supposed that the foundation is deformable and considered the body to be rectangular like in \cite{Sofonea2017}. Rare works focused on the contact of two deformable bodies.  Like in \cite{Rochdi1997, Barboteu2003}, the models were taken with finite time interval and with no initial gap. Further more, in the second work \cite{Barboteu2003}, the body's material is considered elastic. In this paper we consider the last mentioned case but with different contact conditions and other constitutive law. Namely, we took the time interval to be $\left[0, \infty\right)$ and an initial gap function that could be different from zero. 
	
	To simplify notations, calculations and without loss of generality, we considered the contact model in \textbf{Figure}\ref{figure2} which is a body colliding with itself rather than the model in \textbf{Figure}\ref{figure3} which represent the contact of a two disjoint bodies with the same material. We note that the existence and uniqueness properties of the two models are the same. The body's material is considered viscoplastic which defined by the law:
	\begin{equation}
		\dot{\bm{\sigma}}(t) = \mathcal{E} \bm{\varepsilon}(\dot{\bm{u}}(t)) + \mathcal{G}(\bm{\sigma}(t), \bm{\varepsilon}(\bm{u}(t))),    {\ \ \ \ \ } t\geq0.
	\end{equation}
		
	The contact is considered frictionless, with no penetration between the contact surfaces $\Gamma^a_c$ and $\Gamma^b_c$. The proof in this work is inspired by \cite[Section~5]{Sofonea2022}, where the author examines a body coming into contact with a rigid foundation covered by a deformable layer. Moreover, the viscoplastic law in \cite[Section~5]{Sofonea2022} is not presented in its general form as it is in our work.

%	\begin{figure}[t]  
%		\centering
%		\includegraphics[width=\linewidth,height=0.3\textheight,keepaspectratio]{C:/Users/DELL/Desktop/Tykhonov Well Posedness For Contact Problems/LateX for publishing/Untitled01.png}
%		\caption{}%.a. is a body comes to contact with rigid fixed obstacle, .b. is a two surfaces contact problem we are studying in this work, .c. is a two separate bodies of the same material comes to contact}
%		\label{fig:myimage}
	
%	\end{figure}
	
\begin{figure}[t]
	\centering
	\begin{minipage}[t]{0.33\textwidth}
		\centering
		\includegraphics[width=\textwidth,height=0.25\textheight,keepaspectratio]{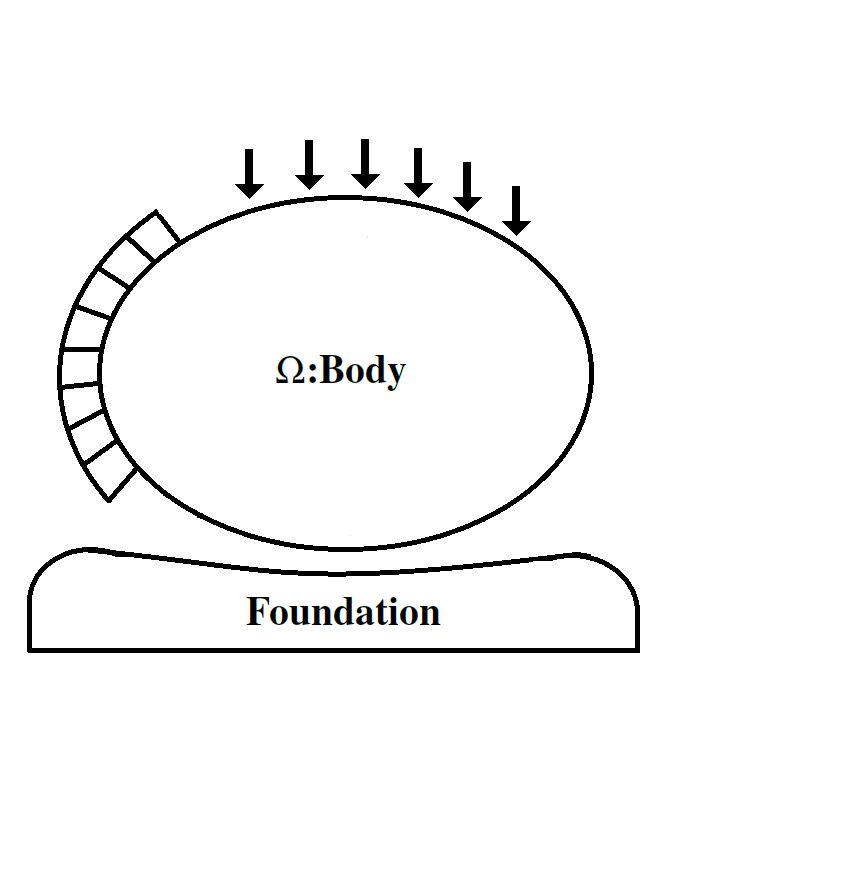}
		\caption{Contact with a rigid \\fixed foundation.}
		\label{figure1}
	\end{minipage}%
	\begin{minipage}[t]{0.33\textwidth}
		\centering
		\includegraphics[width=\textwidth,height=0.25\textheight,keepaspectratio]{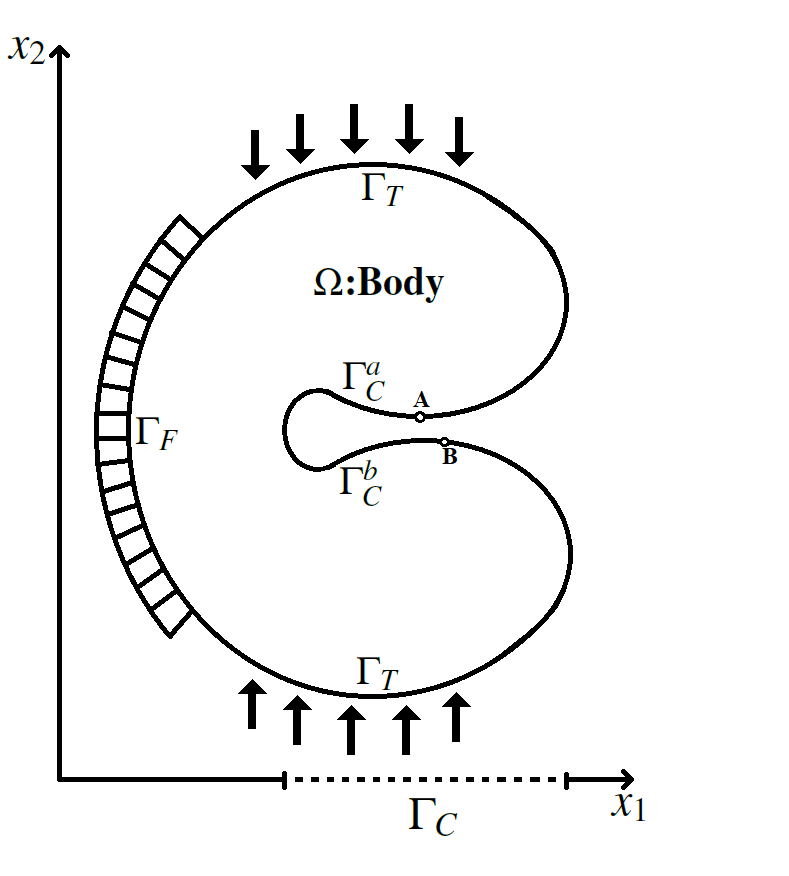}
		\caption{Contact between two \\deformable surfaces (the case\\ studied in this paper).}
		\label{figure2}
	\end{minipage}%
	\begin{minipage}[t]{0.33\textwidth}
		\centering
		\includegraphics[width=\textwidth,height=0.25\textheight,keepaspectratio]{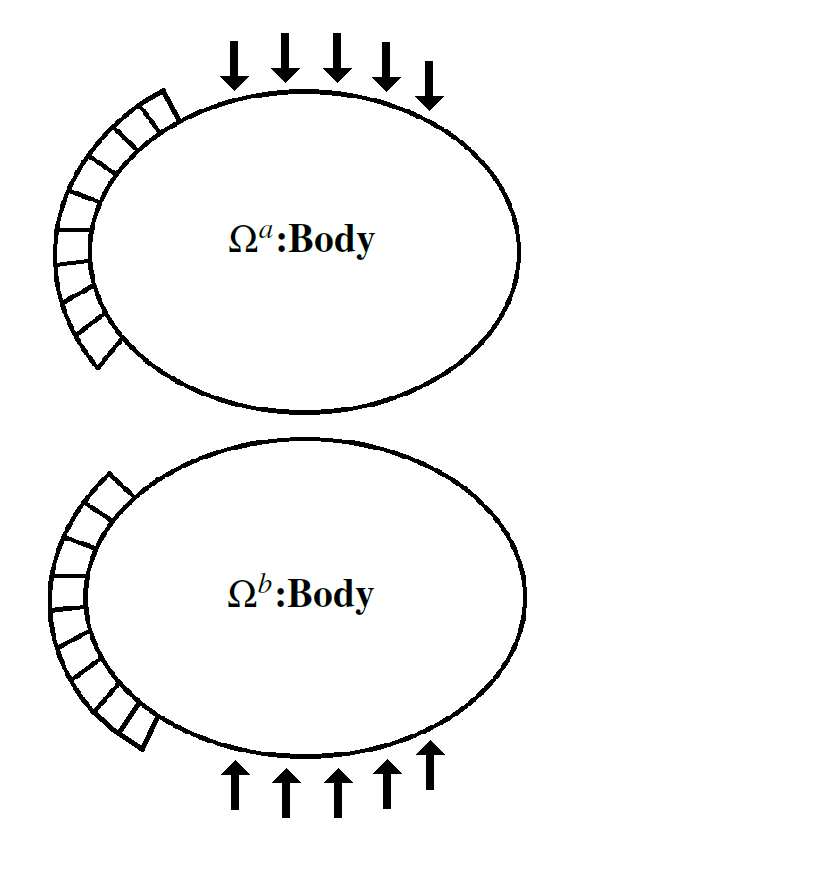}
		\caption{Contact between two \\deformable bodies.}
		\label{figure3}
	\end{minipage}
	\label{fig:contact_models}
\end{figure}
	
	The rest of the paper is divided as follows: In next section, we define the spaces, operators, and theorems that will be used in later sections. In Section 3, we give the classical modeling of the problem, then derive its weak form. In the last section, we begin by proving two useful lemmas. Using them, we state the main result, which is the existence and uniqueness theorem for the weak problem using fixed point approach.    
	
	\maketitle
	\section{Notations and Preliminaries}\label{notations and preliminaries}
	We begin by introducing the following spaces:
	\begin{equation}
		V = \left\{ v = (v_i) \in H^1(\Omega)^d \mid v_i = 0 \text{ on } \Gamma_i \text{ for } i = 1, \dots, d; \ d \in \{2,3\} \right\},
	\end{equation}	
	which represents the space of possible displacements. The following is a subset of $V$ that represents the set of admissible displacements:
	\begin{equation}
		K = \{ v \in V : v_{n}^R - g \leq 0 \text{ \ a.e. on } \Gamma_C \},
		\label{subset K}
	\end{equation}
	
	 The constraint $v_{n}^R - g \leq 0$ represents the non-penetration condition of the upper side $\Gamma_C^a$ relative to the lower side $\Gamma_C^b$ in the contact area of the body shown in \textbf{Figure} \ref{figure2}. Additionally, points A and B have the same $x_1$ coordinates at time $t\geq0$. For further details, see \cite[Chapter~6.8.]{KikuchiOden1988}. The set $K$ is a nonempty closed convex subset of $V$.  
	
	\begin{equation}
		Q = \left\{ \sigma \in \mathbb{S}^d \mid \sigma_{ij} \in L^2(\Omega); \, i,j = 1, \dots, d \right\},
	\end{equation}	
	
	where $\mathbb{S}^d$ is the space of second-order symmetric tensors on $\mathbb{R}^d$ and $0_{\mathbb{S}^d}$ denotes its additive identity element. We denote by ``$\cdot$" and $\|\cdot\|$ the inner product and the Euclidean norm, respectively, on the spaces $\mathbb{R}^d$ and $\mathbb{S}^d$,  The spaces \( V \) and \( Q \) become real Hilbert spaces when endowed with the inner products:
	
	\begin{equation}
		(u, v)_V = \int_{\Omega} \varepsilon(u) \cdot \varepsilon(v) \,dx, \quad \forall u,v \in V,
	\end{equation}
	where $\varepsilon$ represents the strain operator defined by:
	\begin{equation}
		\varepsilon(v) = (\varepsilon_{ij}(v)); \quad \varepsilon_{ij}(v) = \frac{1}{2} (v_{i,j} + v_{j,i}), \quad \forall v \in V,
	\end{equation}
	the notation $v_{i,j}=\partial v_{i}/\partial x_{j}$ denotes the partial derivative of $v_{i}$ with respect to the spatial variable $x_{j}$. For $\sigma,\tau \in Q$:
	
	\begin{equation}
		(\sigma, \tau)_Q = \int_{\Omega} \sigma \cdot \tau \,dx.
	\end{equation}
	
	 We denote by $C(\mathbb{R}_+, V)$ and $C(\mathbb{R}_+, Q)$ the spaces of  continuous functions mapping $\mathbb{R}_+$ to $V$ and $\mathbb{R}_+$ to $Q$, respectively.
	We define the following real Banach space $Q_{\infty}$ as: 
	\begin{equation}
		Q_{\infty} = \left\{ A = (a_{ijkl}) \mid a_{ijkl} = a_{jikl} = a_{klij} \in L^\infty(\Omega); \, 1 \leq i,j,k,l \leq d \right\},
	\end{equation}
	equipped with the norm:	
	\begin{equation}
		\|A\|_{Q_{\infty}} = \max_{0 \leq i,j,k,l \leq d} \|a_{ijkl}\|_{L^{\infty}(\Omega)}.
	\end{equation}
	
	In the space $Q_{\infty}$, the following inequality holds:
	
	\begin{equation}
		\|A \tau\|_Q \leq d \|A\|_{Q_{\infty}} \|\tau\|_Q, \quad \forall A \in Q_{\infty}, \, \tau \in Q.
		\label{Q infinity property}
	\end{equation}

	Now we state assumptions that hold throughout the rest of paper and define the operators that will be used in the modeling and weak formulation of our problem. We begin by assuming that the material constitutive functions $\mathcal{E}$ and $\mathcal{G}$ satisfy:   
	
	\begin{equation}
		\left\{
		\begin{array}{ll}
			\text{(a)} \quad \mathcal{E} \in Q_{\infty}. \\
			\text{(b)} \quad \text{There exists a constant } m_{\mathcal{E}} > 0 \text{ such that} \\
			\quad \quad \mathcal{E} \tau \cdot \tau \geq m_{\mathcal{E}} \|\tau\|^2 \quad \text{for all } \tau \in \mathbb{S}^d, \text{ a.e. in } \Omega.
		\end{array}
		\right.
		\label{elastic tensor}
	\end{equation}
	
	\begin{equation}
		\label{inelastic operator}
		\left\{
		\begin{array}{ll}
			\text{(a)} \quad \mathcal{G} : \Omega \times \mathbb{S}^d \times \mathbb{S}^d \to \mathbb{S}^d. \\
			\text{(b)} \quad \text{There exists a constant } L_{\mathcal{G}} > 0 \text{ such that} \\
			\quad \quad \|\mathcal{G}(x, \sigma_1, \varepsilon_1) - \mathcal{G}(x, \sigma_2, \varepsilon_2)\|
			  \leq L_{\mathcal{G}} \left( \|\sigma_1 - \sigma_2\| + \|\varepsilon_1 - \varepsilon_2\| \right) \\
			\quad \quad \forall \sigma_1, \sigma_2, \varepsilon_1, \varepsilon_2 \in \mathbb{S}^d, \text{ a.e. in } \Omega. \\
			\text{(c)} \quad \text{The mapping } x \mapsto \mathcal{G}(x, \sigma, \varepsilon) \text{ is measurable,} 
			\, \forall \sigma, \varepsilon \in \mathbb{S}^d. \\
			\text{(d)} \quad \text{The mapping } x \mapsto \mathcal{G}(x, 0_{\mathbb{S}^d}, 0_{\mathbb{S}^d}) \text{ belongs to } Q.
		\end{array}
		\right.
	\end{equation}
	   
	We endow the space $V$ with the inner product: 	   
	\begin{equation}
		( u, v )_{\mathcal{E}} = \int_{\Omega} \mathcal{E} \varepsilon(u) \cdot \varepsilon(v) \,dx, \quad u,v \in V,
	\end{equation}
	and $\|\cdot\|_{\mathcal{E}}$ is the corresponding norm. The norms $\|\cdot\|_{\mathcal{E}}$ and $\|\cdot\|_V$ are equivalent, since \eqref{Q infinity property} and \eqref{elastic tensor}(b) give us:
	\begin{equation}
		\sqrt{m_{\mathcal{E}}} \|v\|_{V} \leq \|v\|_{\mathcal{E}} 
		\leq \sqrt{d\|\mathcal{E}\|_{Q_{\infty}}} \|v\|_{V}, 
		\quad \forall v \in V.
	\end{equation}

	The following operators will be used in Sections~\ref{modeling and variational formulation} and \ref{existence and uniqueness results section}: $A: V \to V$, $\Lambda : C(\mathbb{R}_+; Q) \times C(\mathbb{R}_+; Q) \to C(\mathbb{R}_+; Q)$, and the function $f : \mathbb{R}_+ \to V$, defined by:
	
	\begin{equation}
		(A u, v)_V = (u, v)_\mathcal{E}, \quad \forall u,v \in V.
	\label{operator A}
	\end{equation}
	
	\begin{equation}
		\Lambda(\sigma, \tau)(t) = \int_0^t \mathcal{G}({\sigma}(s), \tau(s))ds 
		+ \sigma_0 - \mathcal{E} \varepsilon (u_0), 
		\quad \forall \sigma, \tau \in C(\mathbb{R}_+; Q), \, t \in \mathbb{R}_+.
	\end{equation}
	
	\begin{equation}
		(f(t), v)_V = \int_{\Omega} f_0 (t) \cdot v \, dx 
		+ \int_{\Gamma_T} f_1 (t) \cdot v \, d\Gamma, 
		\quad \forall v \in V, \, t \in \mathbb{R}_+,
	\end{equation}
	where $f_0$	is the body force density and $f_1$	is the surface traction density. The primal variational formulation for a Signorini-type contact problem is:
	\begin{equation} \label{primal variational formulation}
	    \forall t\geq0, \text{ \ \ } u(t) \in K \quad \text{and} \quad	\int_{\Omega} \sigma(t) \cdot \varepsilon(v-u(t)) \,dx \geq (f(t), v - u(t))_V,\quad \forall v\in K.
	\end{equation}
	For further details on inequality \eqref{primal variational formulation}, we refer the reader to \cite[Chapter~2.5.]{KikuchiOden1988}. We conclude this section by recalling some results that will be used in establishing our main results in the final section.
	\begin{proposition}\cite[Proposition~31]{brezis1968} \label{Brezis proposition}
		Let $H$ be a Hilbert space and $X$ a closed convex subset of $H$.  
		Let $A$ be a Lipschitz mapping from $X$ into $H$ satisfying  
		\[
		(Ax - Ay, x - y) \geq c \|x - y\|^2, \quad \forall x, y \in X \text{ with } c > 0.
		\]
		Let $\varphi$ be a lower semicontinuous convex function from $X$ into $]-\infty, +\infty]$, not identically equal to $+\infty$.  
		Then, for any $f \in H$ there exists a unique $u \in X$ such that  
		\begin{equation} \label{ineq4.3}
			(f, v - u) - (Au, v - u) \leq \varphi(v) - \varphi(u), \quad \forall v \in X.
		\end{equation}	
		Moreover, inequality \eqref{ineq4.3} can be solved using a successive approximations method.
	\end{proposition}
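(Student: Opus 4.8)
The plan is to recast the variational inequality \eqref{ineq4.3} as a fixed point equation on $X$ and to solve it by the Banach contraction principle; the successive approximations announced in the statement are then precisely the Picard iterates of the associated operator. It is convenient to first extend $\varphi$ to all of $H$ by setting $\varphi(v)=+\infty$ for $v\in H\setminus X$. Since $X$ is closed, the extended functional (still denoted $\varphi$) is proper, convex and lower semicontinuous on $H$, and \eqref{ineq4.3} is unchanged if the test element $v$ is allowed to range over $H$ instead of $X$.

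Uniqueness comes first and uses only the strong monotonicity hypothesis: if $u_1$ and $u_2$ both satisfy \eqref{ineq4.3}, one writes the inequality for $u_1$ with test element $v=u_2$, the inequality for $u_2$ with test element $v=u_1$, and adds the two; the $\varphi$-terms cancel and one is left with $(Au_1-Au_2,\,u_1-u_2)\le 0$, whence $c\,\|u_1-u_2\|^2\le 0$ and $u_1=u_2$.

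For existence, fix a parameter $\rho>0$ and introduce the proximal (resolvent) operator $J_\rho:H\to X$ of $\rho\varphi$, where $J_\rho z$ is the unique minimizer over $H$ of $w\mapsto \varphi(w)+\tfrac{1}{2\rho}\|w-z\|^2$. Existence and uniqueness of this minimizer follow from the direct method (coercivity, strict convexity, lower semicontinuity), its range is contained in $X$ because $\varphi\equiv+\infty$ off $X$, and its Euler inequality reads
\[
\bigl(z-J_\rho z,\,v-J_\rho z\bigr)\le \rho\bigl(\varphi(v)-\varphi(J_\rho z)\bigr),\qquad \forall\, v\in H .
\]
A standard manipulation of this inequality shows that $J_\rho$ is nonexpansive on $H$. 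Moreover, inserting $z=u+\rho(f-Au)$ into the Euler inequality and dividing by $\rho$ shows that $u\in X$ solves \eqref{ineq4.3} if and only if $u$ is a fixed point of the operator $T_\rho:X\to X$ defined by $T_\rho w = J_\rho\bigl(w+\rho(f-Aw)\bigr)$.

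It then remains to check that $T_\rho$ is a contraction for a suitable $\rho$. Let $L$ denote the Lipschitz constant of $A$. Using the nonexpansiveness of $J_\rho$, then expanding the square and invoking the hypotheses on $A$, one gets for $w_1,w_2\in X$
\begin{align*}
\|T_\rho w_1-T_\rho w_2\|^2
&\le \|w_1-w_2\|^2 - 2\rho\bigl(Aw_1-Aw_2,\,w_1-w_2\bigr) + \rho^2\|Aw_1-Aw_2\|^2 \\
&\le \bigl(1-2\rho c+\rho^2 L^2\bigr)\|w_1-w_2\|^2 ,
\end{align*}
and (noting $c\le L$ by Cauchy--Schwarz) the factor $1-2\rho c+\rho^2 L^2$ lies in $[0,1)$ for every $\rho\in\bigl(0,\,2c/L^2\bigr)$, so $T_\rho$ is a contraction on the complete metric space $X$ (a closed subset of the Hilbert space $H$). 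Banach's fixed point theorem then provides a unique fixed point $u\in X$, i.e. the unique solution of \eqref{ineq4.3}, and guarantees that the iterates $u_{n+1}=T_\rho u_n$ converge to $u$ for every starting point $u_0\in X$, which is the asserted successive approximations method. I expect the only genuinely technical point to be the construction of $J_\rho$ together with the verification of its nonexpansiveness and of the equivalence between the fixed point problem for $T_\rho$ and \eqref{ineq4.3}; once this correspondence is in place, the contraction estimate and the appeal to the Banach principle are routine.
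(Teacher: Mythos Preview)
The paper does not supply its own proof of this proposition: it is quoted verbatim from Br\'ezis \cite{brezis1968} and used as a black box in the proof of Lemma~\ref{lemma1}. There is therefore no in-paper argument to compare against.

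That said, your argument is correct and is in fact the classical route to this result. The reduction to a fixed point of $T_\rho w=J_\rho\bigl(w+\rho(f-Aw)\bigr)$ via the proximal map of $\rho\varphi$, the nonexpansiveness of $J_\rho$, and the contraction estimate $1-2\rho c+\rho^2L^2<1$ for $\rho\in(0,2c/L^2)$ are exactly the ingredients Br\'ezis uses; your observation that $c\le L$ (so the quadratic in $\rho$ stays nonnegative) is a nice touch that makes the range of admissible $\rho$ explicit. The only places to be slightly careful are (i) that the extended $\varphi$ is lower semicontinuous on $H$ because $X$ is closed and convex, and (ii) that $T_\rho$ indeed maps $X$ into $X$ since $J_\rho$ takes values in $\{\varphi<+\infty\}\subset X$; you have addressed both, so the proof stands.
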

	
	\begin{definition}
		An operator \( S: C(\mathbb{R}_+;X) \to C(\mathbb{R}_+;X) \) is called an \textit{Almost History-Dependent} operator if and only if for any \( m \in \mathbb{N} \) there exist \( l_m \in [0,1) \) and \( L_m > 0 \) such that
		\begin{equation}
			\| S u_1 (t) - S u_2 (t) \|_X \leq l_m \| u_1 (t) - u_2 (t) \|_X 
			+ L_m \int_0^t \| u_1 (s) - u_2 (s) \|_X ds
		\end{equation}
		for all \( u_1, u_2 \in C(\mathbb{R}_+;X) \), \( t \in [0,m] \).
	\end{definition}
		
	\begin{theorem}\cite[Theorem~31]{SofoneaAHD}\label{ahd theorem}
		Let \( (X, \|\cdot\|_X) \) be a Banach space and \( S : C(\mathbb{R}_+;X) \to C(\mathbb{R}_+;X) \) be an Almost History-Dependent operator. Then \( S \) has a unique fixed point \( \eta \in C(\mathbb{R}_+;X) \) i.e., \quad $S\eta (t)= \eta(t) \quad \forall t \in \mathbb{R}_+$ .
	\end{theorem}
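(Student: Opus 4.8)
The plan is to localize the problem to compact time intervals, convert $S$ into a genuine Banach contraction there by replacing the supremum norm with a weighted (Bielecki-type) norm, and then glue the local fixed points into a single global one. The reason a weighted norm is needed — rather than the iterated-kernel trick that works for purely history-dependent operators — is precisely the presence of the nonvanishing coefficient $l_m$.

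First I would note that the defining inequality, read on $[0,m]$, shows that the values of $Su$ on $[0,m]$ depend only on the restriction $u|_{[0,m]}$: if $u_1,u_2\in C(\mathbb{R}_+;X)$ coincide on $[0,m]$, then for every $t\in[0,m]$ the right-hand side vanishes, so $Su_1(t)=Su_2(t)$. Hence $S$ induces a well-defined operator $S_m:C([0,m];X)\to C([0,m];X)$ (extend an element of $C([0,m];X)$ to $C(\mathbb{R}_+;X)$ arbitrarily, apply $S$, restrict back to $[0,m]$).

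Next, for a fixed $m$ I would equip $C([0,m];X)$ with the norm $\|u\|_{\beta}=\max_{t\in[0,m]}e^{-\beta t}\|u(t)\|_X$, where $\beta>0$ is to be chosen; this norm is equivalent to the supremum norm, so the space remains Banach. For $u_1,u_2\in C([0,m];X)$ and $t\in[0,m]$, multiplying the almost history-dependent inequality by $e^{-\beta t}$ and using
\[
e^{-\beta t}\int_0^t\|u_1(s)-u_2(s)\|_X\,ds=e^{-\beta t}\int_0^t e^{\beta s}\bigl(e^{-\beta s}\|u_1(s)-u_2(s)\|_X\bigr)\,ds\le\|u_1-u_2\|_{\beta}\,\frac{1-e^{-\beta t}}{\beta}\le\frac{1}{\beta}\|u_1-u_2\|_{\beta},
\]
I obtain $\|S_m u_1-S_m u_2\|_{\beta}\le\bigl(l_m+L_m/\beta\bigr)\|u_1-u_2\|_{\beta}$. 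Since $l_m<1$, choosing $\beta=\beta_m$ large enough that $l_m+L_m/\beta_m<1$ makes $S_m$ a contraction on $(C([0,m];X),\|\cdot\|_{\beta_m})$, so by the Banach fixed point theorem it has a unique fixed point $\eta_m\in C([0,m];X)$, reachable by successive approximations.

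Finally I would glue: for $m<m'$ the restriction $\eta_{m'}|_{[0,m]}$ is a fixed point of $S_m$ (computing $S_m$ on it via the extension $\eta_{m'}$ gives $(S_{m'}\eta_{m'})|_{[0,m]}=\eta_{m'}|_{[0,m]}$), hence equals $\eta_m$ by uniqueness. Thus the $\eta_m$ are mutually consistent and define a function $\eta\in C(\mathbb{R}_+;X)$ with $\eta|_{[0,m]}=\eta_m$ for all $m$; for $t\in\mathbb{R}_+$, picking $m\ge t$ yields $(S\eta)(t)=(S_m\eta_m)(t)=\eta_m(t)=\eta(t)$, so $S\eta=\eta$, and any other fixed point $\zeta$ satisfies $\zeta|_{[0,m]}=\eta_m$ for every $m$, giving $\zeta=\eta$. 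I expect the main obstacle to be the third step — recognizing that the weighted norm is unavoidable and tuning $\beta_m$ correctly — while the dependence of $\beta_m$ on $m$ is harmless, since the gluing argument only invokes uniqueness on each fixed interval.
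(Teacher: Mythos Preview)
Your argument is correct. Note, however, that the paper does not prove this theorem at all: it is quoted verbatim from \cite{SofoneaAHD} and used as a black box, so there is no ``paper's own proof'' to compare against. The route you take --- localizing to $[0,m]$ via the observation that the defining estimate forces $Su|_{[0,m]}$ to depend only on $u|_{[0,m]}$, equipping $C([0,m];X)$ with a Bielecki norm $\|\cdot\|_{\beta}$ so that the Lipschitz constant becomes $l_m+L_m/\beta<1$ for $\beta$ large, and then gluing the unique local fixed points by consistency --- is precisely the standard proof of this result (and is essentially the one in the cited monograph). One cosmetic remark: in your last paragraph you call the weighted-norm step ``the third step,'' but it is your second; the actual third step (gluing) is routine once uniqueness on each $[0,m]$ is in hand.
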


	\maketitle
	\section{Modeling and Variational Formulation}\label{modeling and variational formulation}

		We consider our viscoplastic body as a set of particles occupying the domain $\overline{\Omega} \subset \mathbb{R}^2$ in its reference configuration, as shown in \textbf{Figure} \ref{figure2}. Here, $\Omega$ is an open, bounded, and connected set such that $\overline{\Omega} = \Omega \cup \Gamma$. We assume that the boundary $\Gamma$ is divided into three mutually disjoint parts: $\Gamma_F$, $\Gamma_T$, and $\Gamma_C^a \cup \Gamma_C^b$. The body is fixed along $\Gamma_F$ (with $meas(\Gamma_F) > 0$), meaning that the displacement satisfies $u = 0$ on $\Gamma_F \times \mathbb{R}+$. The body is subjected to a body forces $f_0$ acting on $\Omega \times \mathbb{R}+$ and to a surface traction $f_1$ applied on $\Gamma_T \times \mathbb{R}_+$. The part $\Gamma_C^a \cup \Gamma_C^b$ contains the area that comes into contact during the time interval $[0, +\infty)$, where the contact is governed by the Signorini conditions with an initial gap function $g$, and $\Gamma_C$ is its projection on the $x_1$ axis. In the three-dimensional setting, the present two-dimensional model represents a cross-section of the three-dimensional body. We assume that the process is quasistatic, the stress is in equilibrium, and that the contact is frictionless. Accordingly, the classical formulation of the problem under consideration is as follows:
		
		\begin{problem}\label{classic problem}
		Find a displacement field $u:\Omega\times[0,+\infty)\to\mathbb{R}^d$ and a stress tensor $\sigma:\Omega\times[0,+\infty)\to\mathbb{S}^d$	such that $\forall t\geq0$, the following holds:		
			\begin{align}				
				\dot{\bm{\sigma}}(t) &= \mathcal{E} \bm{\varepsilon}(\dot{\bm{u}}(t)) + \mathcal{G}(\bm{\sigma}(t), \bm{\varepsilon}(\bm{u}(t))) \text{ in } \Omega, \quad  \label{material constitutive law}\\
				\text{Div} \, \sigma(t) + f_0(t) &= 0 \text{ in } \Omega, \quad \label{equilibrium} \\
				u(t) &= 0 \text{ on } \Gamma_F,   \\
				\sigma(t)n_T &= f_1(t) \text{ on } \Gamma_T,   \\
				u_{n}^R(t) - g &\leq 0 \text{ on } \Gamma_C, \label{signorini1}\\
				(u_{n}^R(t) - g)\sigma_n(t) &=0 \text{ on } \Gamma_C, \label{signorini2}\\
				\sigma_n(t) &\leq 0 \text{ on } \Gamma_C, \label{signorini3}\\
				\sigma_\tau(t) &= 0 \text{ on } \Gamma_C.\label{signorini4}
			\end{align}
			\end{problem}

	In \ref{equilibrium}, $\text{Div}$ represents the divergence operator of the stress tensor $\sigma$, defined as $\text{Div} \, \sigma(t) = \big(\sum_{j=1}^{d} \frac{\partial \sigma(t)_{i,j}}{\partial x_{j}}\big)_{i=1,...,d}$. $n_T$ denotes the normal outward unit vector on $\Gamma_T$. \ref{signorini1} $\mathord{-}$ \ref{signorini4} represents the Signorini-type contact conditions, detailed in \cite[Chapter~6.8.]{KikuchiOden1988}. $\sigma_n$ and $\sigma_\tau$ represent the normal and tangential stress, respectively. Besides the definitions and assumptions on the data in Section \ref{notations and preliminaries}, we assume that the body forces, surface tractions, and initial conditions satisfy the following: 
	\begin{equation}
		f_0 \in C(\mathbb{R}_+; L^2(\Omega)^d).
		\label{body forces}
	\end{equation}
	\begin{equation}
		f_1 \in C(\mathbb{R}_+; L^2(\Gamma_T)^d).
		\label{surface traction}
	\end{equation}
	\begin{equation}
		u(0)=u_0 \in V, \quad \sigma(0)=\sigma_0 \in Q.
		\label{initial data}
	\end{equation}
	\maketitle
	
	We now proceed to construct a variational formulation of our problem. Assuming that $(u,\sigma)$ is a smooth solution to the problem \ref{classic problem}, let $v \in K$ and $t\geq0$.\\
	By integrating \ref{material constitutive law} over time, we obtain:  
	\begin{equation}\label{integrated stress}
		\sigma(t) = \mathcal{E} \varepsilon (u(t)) + \Lambda(\sigma, \varepsilon(u))(t).
	\end{equation}
	Substituting this into the primal variational formulation \ref{primal variational formulation}, we obtain:
	\begin{equation}\label{primal variational formulation of our problem}
		\int_{\Omega} \mathcal{E} \varepsilon(u(t)) \cdot \varepsilon(v-u(t))+\int_{\Omega} \Lambda(\sigma, \varepsilon(u))(t) \cdot \varepsilon(v-u(t)) \,dx \geq (f(t), v - u(t))_V.
	\end{equation} 
	Using \ref{integrated stress} and \ref{primal variational formulation of our problem}, we conclude the following weak formulation corresponding to the classical problem \ref{classic problem}:
	
	\begin{problem}\label{problempw}
		Find $u \in C(\mathbb{R}_+, V)$, $\sigma \in C(\mathbb{R}_+, Q)$, and $\eta \in C(\mathbb{R}_+, Q)$ such that $\forall t \in \mathbb{R}_+$, we have:
		\begin{equation*}
			\left \{
			\begin{array}{ll}
				\sigma(t) = \mathcal{E} \varepsilon (u(t)) + \eta (t) \\
				u(t) \in K,\quad (u(t), v - u(t))_\mathcal{E} + (\eta(t)), \varepsilon (v - u(t)))_Q \geq (f(t), v - u(t))_V, \quad \forall v \in K. \\
				\eta(t) =  \Lambda(\sigma, \varepsilon(u))(t)
			\end{array}
			\right. 
		\end{equation*}	
			
	\end{problem}
	
	\section{Existence and Uniqueness Results} \label{existence and uniqueness results section} 
	
	We first prove the following two lemmas corresponding to our weak form.
	
	\begin{lemma} \label{lemma1}
	For each $\eta \in C(\mathbb{R}_+, Q)$, there exists a unique element $u_\eta \in C(\mathbb{R}_+, V)$ such that, $\forall t \in \mathbb{R}_+, \quad u_\eta(t) \in K$ and
	\begin{equation}
		 \quad (A u_\eta(t), v - u_\eta(t))_V + (\eta(t)), \varepsilon (v - u_\eta(t)))_Q \geq \quad (f(t), v - u_\eta(t))_V, \quad \forall v \in K.
	\label{lemma inequality}
	\end{equation}
	\end{lemma}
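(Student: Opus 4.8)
The plan is to fix $\eta \in C(\mathbb{R}_+, Q)$ and, for each fixed $t \in \mathbb{R}_+$, apply Proposition~\ref{Brezis proposition} (Brézis) with $H = V$, $X = K$, the operator $A$ defined in \eqref{operator A}, the indicator-type functional $\varphi \equiv 0$ on $K$, and a suitably chosen right-hand side $f_\eta(t) \in V$. First I would verify the hypotheses of the proposition. Since $K$ is a nonempty closed convex subset of $V$, the set condition holds. The operator $A$ is linear and bounded: from \eqref{operator A} and the equivalence of norms $\|\cdot\|_{\mathcal{E}}$ and $\|\cdot\|_V$, one gets $(Au,v)_V = (u,v)_{\mathcal{E}} \le \|u\|_{\mathcal{E}}\|v\|_{\mathcal{E}} \le d\|\mathcal{E}\|_{Q_\infty}\|u\|_V\|v\|_V$, so $A$ is Lipschitz; and $(Au - Av, u-v)_V = \|u-v\|_{\mathcal{E}}^2 \ge m_{\mathcal{E}}\|u-v\|_V^2$, giving the strong monotonicity constant $c = m_{\mathcal{E}} > 0$. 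Then I would define $f_\eta(t) \in V$ by $(f_\eta(t), v)_V = (f(t),v)_V - (\eta(t), \varepsilon(v))_Q$ for all $v \in V$ — this is a bounded linear functional on $V$ (using $|(\eta(t),\varepsilon(v))_Q| \le \|\eta(t)\|_Q\|v\|_V$), hence by Riesz it is represented by an element of $V$. Brézis then yields, for each $t$, a unique $u_\eta(t) \in K$ satisfying $(f_\eta(t) - Au_\eta(t), v - u_\eta(t))_V \le 0$ for all $v \in K$, which is exactly \eqref{lemma inequality}.

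The remaining and main point is to show $t \mapsto u_\eta(t)$ is continuous from $\mathbb{R}_+$ to $V$, i.e. $u_\eta \in C(\mathbb{R}_+, V)$. For this I would take $t_1, t_2 \in \mathbb{R}_+$, write the variational inequality \eqref{lemma inequality} at $t_1$ tested with $v = u_\eta(t_2)$ and at $t_2$ tested with $v = u_\eta(t_1)$ (both admissible since both iterates lie in $K$), add the two inequalities, and use the strong monotonicity of $A$. This produces an estimate of the form
\begin{equation*}
	m_{\mathcal{E}} \|u_\eta(t_1) - u_\eta(t_2)\|_V^2 \le (f(t_1) - f(t_2), u_\eta(t_1) - u_\eta(t_2))_V + (\eta(t_1) - \eta(t_2), \varepsilon(u_\eta(t_2) - u_\eta(t_1)))_Q,
\end{equation*}
whence by Cauchy–Schwarz
\begin{equation*}
	\|u_\eta(t_1) - u_\eta(t_2)\|_V \le \frac{1}{m_{\mathcal{E}}}\big( \|f(t_1) - f(t_2)\|_V + \|\eta(t_1) - \eta(t_2)\|_Q \big).
\end{equation*}
Since $f \in C(\mathbb{R}_+, V)$ (this follows from \eqref{body forces}, \eqref{surface traction} and the definition of $f$) and $\eta \in C(\mathbb{R}_+, Q)$, the right-hand side tends to $0$ as $t_2 \to t_1$, giving continuity.

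I expect the genuinely delicate step to be the careful bookkeeping of which functional plays the role of Brézis's ``$f$'': one must confirm that absorbing the $\eta$-term into the right-hand side is legitimate (it is, because $v \mapsto (\eta(t), \varepsilon(v))_Q$ is linear and bounded on $V$, independently of the nonlinearity, since $\eta(t)$ is frozen), and that the pointwise-in-$t$ uniqueness from Brézis is enough to define a single function $u_\eta$ whose regularity is then upgraded to continuity by the Lipschitz estimate above. Everything else — the properties of $A$, the convexity and closedness of $K$ — is routine given the assumptions \eqref{elastic tensor} and the structure of $K$ recorded in \eqref{subset K}.
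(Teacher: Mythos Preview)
Your proof is correct and follows essentially the same approach as the paper: apply Proposition~\ref{Brezis proposition} pointwise in $t$ for existence and uniqueness, then establish continuity by testing the inequalities at two times against each other's solutions and invoking the strong monotonicity of $A$ together with the continuity of $f$ and $\eta$. The only cosmetic difference is that the paper places the $\eta$-term into the convex functional $\varphi(v) = (\eta(t),\varepsilon(v))_Q$, whereas you absorb it into the right-hand side via Riesz and take $\varphi \equiv 0$; the two choices are equivalent.
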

   	
   	\begin{proof} 
 	Let $\eta \in C(\mathbb{R}_+, Q)$.
 	
   	\textit{Existence and Uniqueness:} 
  	Let $t \in \mathbb{R}_+$, and define $\varphi: K \to \mathbb{R}$ by $\varphi(v) = (\eta(t), \varepsilon(v))_Q$, $\forall v \in K$. Now, \eqref{elastic tensor} implies that the operator $A$ defined in \eqref{operator A} is strongly monotone and Lipschitz continuous. Moreover, $K$, defined in \eqref{subset K}, is a closed convex subset of $V$, and \eqref{body forces} and \eqref{surface traction} imply that $f \in C(\mathbb{R}_+, V)$. Additionally, $\varphi$ is a convex lower semicontinuous function. By Proposition \ref{Brezis proposition}, there exists a unique element $u_\eta(t) \in K$ that satisfies \eqref{lemma inequality}, for all $t \in \mathbb{R}_+$.  
   	
   	Next, we show that $u_\eta: \mathbb{R}_+ \to V$ is continuous.  
   	          
   	\textit{Continuity:} Let $t' \in \mathbb{R}_+$ and $\varepsilon > 0$. Since $f$ and $\eta$ are continuous mappings on $\mathbb{R}_+$, we have:
   	
   	\begin{equation*}
   		\exists \, \delta_1 > 0 \text{ such that, for } t'' \in \mathbb{R}_+ \text{ and } |t'' - t'| < \delta_1, \text{ we have } \|f(t'') - f(t')\|_V < \frac{m \varepsilon}{2}
   	\end{equation*}
   	
   	\begin{equation*}
   		\exists \, \delta_2 > 0 \text{ such that, for } t'' \in \mathbb{R}_+ \text{ and } |t'' - t'| < \delta_2, \text{ we have } \|\eta(t'') - \eta(t')\|_Q < \frac{m \varepsilon}{2}
   	\end{equation*}
   	
   	Taking $ \delta = \min \{ \delta_1, \delta_2 \}$ and $t'' \in \mathbb{R}_+$ such that $0 < |t'' - t'| < \delta$, and using the previous existence and uniqueness result, we obtain:
	
	\begin{equation*}
		\exists \, u_{\eta}(t') \in K, \quad ( A u_{\eta}(t'), v - u_{\eta}(t') )_V + (\eta(t'), \varepsilon(v - u_{\eta}(t')))_Q \geq (f(t'), v - u_{\eta}(t'))_V, \quad \forall v \in K
	\end{equation*}
	   	
   	\begin{equation*}
   		\exists \, u_{\eta}(t'') \in K, \quad ( A u_{\eta}(t''), v - u_{\eta}(t'') )_V + (\eta(t''), \varepsilon(v - u_{\eta}(t'')))_Q \geq (f(t''), v - u_{\eta}(t''))_V, \quad \forall v \in K
   	\end{equation*}

   	choosing $v = u_{\eta}(t'')$ in the first inequality and $v = u_{\eta}(t')$ in the second, then adding them together, we obtain:
   	
   	\begin{equation*}
   		( A u_{\eta}(t') - A u_{\eta}(t''), u_{\eta}(t') - u_{\eta}(t'') )_V \leq ( \eta(t'') - \eta(t'), \varepsilon(u_{\eta}(t') - u_{\eta}(t'')) )_Q + ( f(t') - f(t''), u_{\eta}(t') - u_{\eta}(t''))_V
   	\end{equation*}
   	
   	Since $A$ is a strongly monotone operator, applying the Cauchy-Schwarz inequality and using the fact that $\|\varepsilon(\cdot)\|_Q = \|\cdot\|_V$, we obtain:
   	
   	\begin{equation*}
   		m \| u_{\eta}(t') - u_{\eta}(t'') \|_V^2 \leq \| \eta(t') - \eta(t'') |_Q \| u_{\eta}(t') - u_{\eta}(t'') \|_V + \| f(t') - f(t'') |_V \| u_{\eta}(t') - u_{\eta}(t'') \|_V
   	\end{equation*}
   	
   	By the continuity of $f$ and $\eta$ at $t'$, it follows that:
   	
   	\begin{equation*}
   		\| u_{\eta}(t') - u_{\eta}(t'') \|_V \leq \frac{1}{m} \| \eta(t') - \eta(t'') \|_Q + \frac{1}{m} \| f(t') - f(t'') \|_V < \varepsilon
   	\end{equation*}
   	
   	Thus, the function $ u_{\eta} : \mathbb{R}_+ \to V$ is continuous at every point $t' \in \mathbb{R}_+$, i.e., $u_{\eta} \in C(\mathbb{R}_+, V)$.
 
   	\end{proof}
	
	\begin{lemma}\label{estimation lemma}
		For each $\eta_i \in C(\mathbb{R}_+, Q)$, $i \in \{1,2\}$, there exists a corresponding unique pair $(\sigma_{\eta_i}, u_{\eta_i}) \in C(\mathbb{R}_+, Q) \times C(\mathbb{R}_+, V)$ that satisfies, for all $t \in \mathbb{R}_+$:  
		
		\begin{equation}
			\sigma_{\eta_i}(t) = \mathcal{E} \varepsilon (u_{\eta_i}(t)) + \eta_i (t)
			\label{lemma2,1}
		\end{equation}		
		\begin{equation}
			u_{\eta_i}(t) \in K, \quad ( A u_{\eta_i}(t), v - u_{\eta_i}(t) )_V + (\eta_i(t), \varepsilon(v - u_{\eta_i}(t)))_Q \geq ( f(t), v - u_{\eta_i}(t) )_V, \quad \forall v \in K.
			\label{lemma2,2}
		\end{equation}
		
		Moreover:	
		\begin{equation}
			\| \sigma_{\eta_1}(t) - \sigma_{\eta_2}(t) \|_Q + \| \varepsilon (u_{\eta_1}(t)) - \varepsilon (u_{\eta_2}(t)) \|_Q \leq C \| \eta_1 (t) - \eta_2 (t) \|_Q.
			\label{lemma2,3}
		\end{equation}
	\end{lemma}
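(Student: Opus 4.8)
The plan is to get the existence–uniqueness part as an immediate consequence of Lemma~\ref{lemma1}, and the bound \eqref{lemma2,3} by the standard ``subtract the two variational inequalities and test crosswise'' device. For each fixed $i\in\{1,2\}$, Lemma~\ref{lemma1} applied to $\eta_i$ already yields the unique $u_{\eta_i}\in C(\mathbb{R}_+,V)$ with $u_{\eta_i}(t)\in K$ satisfying \eqref{lemma2,2} (which is exactly \eqref{lemma inequality} written for $\eta=\eta_i$). I would then simply \emph{define} $\sigma_{\eta_i}$ through \eqref{lemma2,1}, i.e. $\sigma_{\eta_i}(t)=\mathcal{E}\varepsilon(u_{\eta_i}(t))+\eta_i(t)$. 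Continuity of $\sigma_{\eta_i}$ on $\mathbb{R}_+$ follows because $t\mapsto u_{\eta_i}(t)$ is continuous into $V$, the strain operator $\varepsilon$ is linear and bounded from $V$ into $Q$ with $\|\varepsilon(v)\|_Q=\|v\|_V$, the action of $\mathcal{E}\in Q_\infty$ on $Q$ is bounded via \eqref{Q infinity property}, and $\eta_i\in C(\mathbb{R}_+,Q)$; hence $\sigma_{\eta_i}\in C(\mathbb{R}_+,Q)$. Uniqueness of the pair is then clear: $u_{\eta_i}$ is unique by Lemma~\ref{lemma1}, and \eqref{lemma2,1} determines $\sigma_{\eta_i}$ explicitly from it.

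For the estimate I would fix $t\in\mathbb{R}_+$ and write \eqref{lemma2,2} for $\eta_1$ with the admissible test function $v=u_{\eta_2}(t)\in K$, and \eqref{lemma2,2} for $\eta_2$ with $v=u_{\eta_1}(t)\in K$. Adding the two inequalities, the terms containing $f(t)$ cancel, and writing $w(t)=u_{\eta_1}(t)-u_{\eta_2}(t)$ one is left with $(Au_{\eta_1}(t)-Au_{\eta_2}(t),w(t))_V\le(\eta_2(t)-\eta_1(t),\varepsilon(w(t)))_Q$. Using the strong monotonicity of $A$ with constant $m_{\mathcal{E}}$ (coming from \eqref{elastic tensor}(b) and \eqref{operator A}), the Cauchy–Schwarz inequality, and $\|\varepsilon(w(t))\|_Q=\|w(t)\|_V$, this yields $m_{\mathcal{E}}\|w(t)\|_V^2\le\|\eta_1(t)-\eta_2(t)\|_Q\,\|w(t)\|_V$, hence $\|\varepsilon(u_{\eta_1}(t))-\varepsilon(u_{\eta_2}(t))\|_Q=\|w(t)\|_V\le m_{\mathcal{E}}^{-1}\|\eta_1(t)-\eta_2(t)\|_Q$.

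To close, I would subtract \eqref{lemma2,1} for $i=1$ and $i=2$, getting $\sigma_{\eta_1}(t)-\sigma_{\eta_2}(t)=\mathcal{E}\big(\varepsilon(u_{\eta_1}(t))-\varepsilon(u_{\eta_2}(t))\big)+\big(\eta_1(t)-\eta_2(t)\big)$, and then apply the triangle inequality together with \eqref{Q infinity property} to obtain $\|\sigma_{\eta_1}(t)-\sigma_{\eta_2}(t)\|_Q\le\big(1+d\|\mathcal{E}\|_{Q_\infty}m_{\mathcal{E}}^{-1}\big)\|\eta_1(t)-\eta_2(t)\|_Q$. Summing this with the displacement bound gives \eqref{lemma2,3} with the explicit constant $C=1+\frac{1+d\|\mathcal{E}\|_{Q_\infty}}{m_{\mathcal{E}}}$. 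I do not expect any genuine obstacle: the argument is entirely routine, and the only points demanding care are the sign bookkeeping when the two crosswise-tested inequalities are added, and verifying that $C$ depends only on $d$, $m_{\mathcal{E}}$ and $\|\mathcal{E}\|_{Q_\infty}$ — in particular not on $t$ nor on $\eta_1,\eta_2$ — so that \eqref{lemma2,3} is the uniform Lipschitz-type bound needed to feed the fixed-point (Almost History-Dependent) argument in the next theorem.
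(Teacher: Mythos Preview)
Your proposal is correct and follows essentially the same approach as the paper: invoke Lemma~\ref{lemma1} for existence and uniqueness of $u_{\eta_i}$ (and define $\sigma_{\eta_i}$ via \eqref{lemma2,1}), then crosswise-test the two variational inequalities, use the strong monotonicity of $A$ with Cauchy--Schwarz to bound $\|\varepsilon(u_{\eta_1}(t))-\varepsilon(u_{\eta_2}(t))\|_Q$, and combine this with the triangle inequality and \eqref{Q infinity property} applied to \eqref{lemma2,1} to bound $\|\sigma_{\eta_1}(t)-\sigma_{\eta_2}(t)\|_Q$. Your version is in fact slightly more detailed than the paper's, spelling out the continuity of $\sigma_{\eta_i}$ and an explicit value for the constant $C$.
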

	
	\begin{proof}
		Let $\eta_i \in C(\mathbb{R}_+, Q)$, $i \in \{1,2\}$. Lemma \ref{lemma1} implies that for each $\eta_i$, there exists a unique pair $(\sigma_{\eta_i}, u_{\eta_i})$ in $C(\mathbb{R}_+, Q) \times C(\mathbb{R}_+, V)$ that satisfies \eqref{lemma2,1} and \eqref{lemma2,2}, respectively.  
		
		\textit{Proving \eqref{lemma2,3}:}  
		In \eqref{lemma2,2}, by taking $v = u_{\eta_2}$ for $\eta_1$ and $v = u_{\eta_1}$ for $\eta_2$, we obtain:
		
		\begin{equation*}
			( A u_{\eta_1}(t) - A u_{\eta_2}(t), u_{\eta_2}(t) - u_{\eta_1}(t) )_V + (\eta_1 (t) - \eta_2 (t), \varepsilon (u_{\eta_2}(t)) - \varepsilon (u_{\eta_1}(t)) )_Q \geq 0, \quad \forall t \in \mathbb{R}_+.
		\end{equation*}
		
		Since $A$ is strongly monotone, $\|\varepsilon(\cdot)\|_Q = \|\cdot\|_V$, and using the Cauchy-Schwarz inequality, we obtain:
		
		\begin{equation}
			\| \varepsilon (u_{\eta_1}(t)) - \varepsilon (u_{\eta_2}(t)) \|_Q \leq C_1 \| \eta_1 (t) - \eta_2 (t) \|_Q, \quad \forall t \in \mathbb{R}_+.
			\label{partial result 1}
		\end{equation}
		
		From \eqref{lemma2,1}, we have:	
		\begin{equation*}
			\| \sigma_{\eta_1} (t) - \sigma_{\eta_2} (t) \|_Q \leq \| \mathcal{E} \varepsilon (u_{\eta_1}(t) - u_{\eta_2}(t)) \|_Q + \| \eta_1 (t) - \eta_2 (t) \|_Q, \quad \forall t \in \mathbb{R}_+.
		\end{equation*}
		
		Using \eqref{Q infinity property} and \eqref{partial result 1}, we obtain:
		
		\begin{equation}
			\| \sigma_{\eta_1} (t) - \sigma_{\eta_2} (t) \|_Q \leq C_2 \| \eta_1 (t) - \eta_2 (t) \|_Q, \quad \forall t \in \mathbb{R}_+.
			\label{partial result 2}
		\end{equation}
		
		By adding \eqref{partial result 1} and \eqref{partial result 2}, we obtain estimation \ref{lemma2,3}.
	\end{proof}
	
	We now state the unique solvability result of the weak problem \ref{problempw}. 
	
	\begin{theorem}
		Assume that the data satisfy \eqref{subset K}, \eqref{elastic tensor}, \eqref{inelastic operator}, \eqref{body forces}, \eqref{surface traction}, and \eqref{initial data}. Then, the problem \ref{problempw} has a unique solution.
	\end{theorem}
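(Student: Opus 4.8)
The plan is to recast Problem \ref{problempw} as a fixed point problem for an operator acting on $C(\mathbb{R}_+, Q)$ and then invoke Theorem \ref{ahd theorem}. For each $\eta \in C(\mathbb{R}_+, Q)$, Lemma \ref{lemma1} furnishes the unique $u_\eta \in C(\mathbb{R}_+, V)$ solving \eqref{lemma inequality}, and setting $\sigma_\eta(t) = \mathcal{E}\varepsilon(u_\eta(t)) + \eta(t)$ yields, thanks to \eqref{Q infinity property}, a well-defined $\sigma_\eta \in C(\mathbb{R}_+, Q)$; this is exactly the pair produced by Lemma \ref{estimation lemma}. I would then define $S : C(\mathbb{R}_+, Q) \to C(\mathbb{R}_+, Q)$ by
\[
S\eta(t) = \Lambda(\sigma_\eta, \varepsilon(u_\eta))(t) = \int_0^t \mathcal{G}(\sigma_\eta(s), \varepsilon(u_\eta(s)))\,ds + \sigma_0 - \mathcal{E}\varepsilon(u_0), \qquad t \in \mathbb{R}_+.
\]
One first checks this is legitimate: assumptions \eqref{inelastic operator}(b)--(d) show that $(\tau_1,\tau_2) \mapsto \mathcal{G}(\cdot, \tau_1, \tau_2)$ maps $Q \times Q$ into $Q$ and is Lipschitz there, so $s \mapsto \mathcal{G}(\sigma_\eta(s), \varepsilon(u_\eta(s)))$ is continuous from $\mathbb{R}_+$ into $Q$; hence the integral term is a $C^1$ function of $t$ with values in $Q$, and $S\eta \in C(\mathbb{R}_+, Q)$.

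The heart of the argument is to show that $S$ is an Almost History-Dependent operator. Given $\eta_1, \eta_2 \in C(\mathbb{R}_+, Q)$, the constant terms cancel, so
\[
\|S\eta_1(t) - S\eta_2(t)\|_Q \leq \int_0^t \|\mathcal{G}(\sigma_{\eta_1}(s), \varepsilon(u_{\eta_1}(s))) - \mathcal{G}(\sigma_{\eta_2}(s), \varepsilon(u_{\eta_2}(s)))\|_Q\,ds.
\]
Applying the pointwise Lipschitz bound \eqref{inelastic operator}(b) inside the $L^2(\Omega)$ norm and then the estimate \eqref{lemma2,3} of Lemma \ref{estimation lemma}, the integrand is bounded by $L_{\mathcal{G}}\big(\|\sigma_{\eta_1}(s) - \sigma_{\eta_2}(s)\|_Q + \|\varepsilon(u_{\eta_1}(s)) - \varepsilon(u_{\eta_2}(s))\|_Q\big) \leq L_{\mathcal{G}} C \|\eta_1(s) - \eta_2(s)\|_Q$. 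Hence
\[
\|S\eta_1(t) - S\eta_2(t)\|_Q \leq L_{\mathcal{G}} C \int_0^t \|\eta_1(s) - \eta_2(s)\|_Q\,ds, \qquad t \in \mathbb{R}_+,
\]
which is the defining inequality of an Almost History-Dependent operator with $l_m = 0$ and $L_m = L_{\mathcal{G}} C$ for every $m \in \mathbb{N}$ (in fact $S$ is even history-dependent, but this stronger fact is not needed). By Theorem \ref{ahd theorem}, $S$ admits a unique fixed point $\eta^* \in C(\mathbb{R}_+, Q)$.

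It then remains to transfer this back to Problem \ref{problempw}. Put $u = u_{\eta^*}$, $\sigma = \sigma_{\eta^*}$, $\eta = \eta^*$. The first identity of Problem \ref{problempw} holds by the definition of $\sigma_{\eta^*}$; the variational inequality holds by Lemma \ref{lemma1} together with \eqref{operator A}, which identifies $(A u_{\eta^*}(t), v - u_{\eta^*}(t))_V$ with $(u(t), v - u(t))_\mathcal{E}$; and the third identity is precisely the fixed point relation $\eta^*(t) = S\eta^*(t) = \Lambda(\sigma, \varepsilon(u))(t)$. For uniqueness, if $(u, \sigma, \eta)$ is any solution, the third equation gives $\eta = \Lambda(\sigma, \varepsilon(u))$, while the first two equations and the uniqueness part of Lemma \ref{lemma1} force $(\sigma, u) = (\sigma_\eta, u_\eta)$; hence $\eta = \Lambda(\sigma_\eta, \varepsilon(u_\eta)) = S\eta$, so $\eta$ is a fixed point of $S$ and therefore $\eta = \eta^*$ by Theorem \ref{ahd theorem}, which in turn pins down $u$ and $\sigma$. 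The only delicate points are the continuity/measurability bookkeeping needed to see $S$ maps $C(\mathbb{R}_+, Q)$ into itself, and the passage from the pointwise-in-$x$ Lipschitz estimate on $\mathcal{G}$ to the $Q$-norm estimate — routine once one notes that $|f(x)| \leq g(x) + h(x)$ with $g, h \geq 0$ yields $\|f\|_{L^2} \leq \|g\|_{L^2} + \|h\|_{L^2}$; everything else is just assembling Lemmas \ref{lemma1} and \ref{estimation lemma}.
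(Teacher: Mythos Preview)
Your proposal is correct and follows essentially the same route as the paper: define $S\eta = \Lambda(\sigma_\eta,\varepsilon(u_\eta))$, use the Lipschitz property of $\mathcal{G}$ together with the estimate \eqref{lemma2,3} from Lemma~\ref{estimation lemma} to show $S$ is (almost) history-dependent, and then apply Theorem~\ref{ahd theorem}. If anything, you are slightly more explicit than the paper in checking that $S$ maps $C(\mathbb{R}_+,Q)$ into itself and in spelling out the uniqueness direction of the correspondence between fixed points of $S$ and solutions of Problem~\ref{problempw}.
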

	\begin{proof}
		Let $S: C(\mathbb{R}_+, Q) \to C(\mathbb{R}_+, Q)$ be an operator defined as follows: for $\eta \in C(\mathbb{R}_+, Q)$, we have:
		\begin{equation*}
			S\eta(t) = \Lambda \big( \sigma_{\eta}(t), \varepsilon (u_{\eta}(t)) \big) = \int_0^t \mathcal{G} (\sigma_{\eta} (s), \varepsilon (u_{\eta} (s))) ds + \sigma_0 - \mathcal{E} \varepsilon(u_0)
		\end{equation*}
		where the pair $(\sigma_\eta, u_{\eta})$ satisfies \eqref{lemma2,1} \eqref{lemma2,2} in Lemma \ref{estimation lemma} for the element $\eta$.
		
		For any two elements $\eta_1, \eta_2 \in C(\mathbb{R}_+, Q)$, we have:
		\begin{equation}\label{operator S estimation}
		\begin{array}{rl}
			\| S\eta_1 (t) - S\eta_2 (t) \|_Q &= \left\| \int_0^t \mathcal{G} (\sigma_{\eta_1} (s), \varepsilon (u_{\eta_1} (s))) - \mathcal{G} (\sigma_{\eta_2} (s), \varepsilon (u_{\eta_2} (s))) ds \right\|_Q \\		
			&\leq \int_0^t \| \mathcal{G} (\sigma_{\eta_1} (s), \varepsilon (u_{\eta_1} (s))) - \mathcal{G} (\sigma_{\eta_2} (s), \varepsilon (u_{\eta_2} (s))) \|_Q ds.
		\end{array}
		\end{equation}
		On the other hand, using \ref{inelastic operator}(b), the fact that $\|.\|_Q =\sqrt{\int_\Omega \|.\|^2 dx}$, and the Cauchy-Schwarz inequality on the functions $\alpha, \beta \in L^2(\Omega)$ defined by:
		\begin{equation*}
			\alpha = \| \sigma_{\eta_1} (s) - \sigma_{\eta_2} (s) \|, \quad \beta = \| \varepsilon (u_{\eta_1} (s)) - \varepsilon (u_{\eta_2} (s)) \|,
		\end{equation*}
		we obtain:
		\begin{equation*}
			\begin{array}{rl}
				\| \mathcal{G} (\sigma_{\eta_1} (s), \varepsilon (u_{\eta_1} (s))) - \mathcal{G} (\sigma_{\eta_2} (s), \varepsilon (u_{\eta_2} (s))) \|_Q^2
				& \leq L_\mathcal{G}^2 \int_\Omega \big( \| \sigma_{\eta_1} (s) - \sigma_{\eta_2} (s) \| + \| \varepsilon (u_{\eta_1} (s)) - \varepsilon (u_{\eta_2} (s)) \| \big)^2 dx \\
				
				& \leq L_\mathcal{G}^2  (\| \sigma_{\eta_1} (s) - \sigma_{\eta_2} (s) \|^2_Q + \| \varepsilon (u_{\eta_1} (s)) - \varepsilon (u_{\eta_2} (s)) \|^2_Q \\
				&\quad + 2\int_\Omega \| \sigma_{\eta_1} (s) - \sigma_{\eta_2} (s) \|\| \varepsilon (u_{\eta_1} (s)) - \varepsilon (u_{\eta_2} (s)) \| dx) \\
				 
				& \leq L_\mathcal{G}^2  (\| \sigma_{\eta_1} (s) - \sigma_{\eta_2} (s) \|^2_Q + \| \varepsilon (u_{\eta_1} (s)) - \varepsilon (u_{\eta_2} (s)) \|^2_Q \\  
				&\quad + 2\| \sigma_{\eta_1} (s) - \sigma_{\eta_2} (s) \|_Q\| \varepsilon (u_{\eta_1} (s)) - \varepsilon (u_{\eta_2} (s)) \|_Q)  \\
				
				& \leq L_\mathcal{G}^2 \big( \| \sigma_{\eta_1} (s) - \sigma_{\eta_2} (s) \|_Q + \| \varepsilon (u_{\eta_1} (s)) - \varepsilon (u_{\eta_2} (s)) \|_Q \big)^2.
			\end{array}
		\end{equation*}
		
		Substituting the square root of this into \eqref{operator S estimation} and using \eqref{lemma2,3} in Lemma \ref{estimation lemma}, we obtain:
		\begin{equation*}
			\| S_{\eta_1}(t) - S_{\eta_2}(t) \|_Q \leq L_m \int_0^t \| \eta_1 (s) - \eta_2 (s) \|_Q ds, \quad \forall t \in [0, m],
		\end{equation*}
		
		where $L_m = L_g C$, and $m \in \mathbb{N}$. Hence, $S$ is an Almost History-Dependent operator. Theorem \ref{ahd theorem} implies that $S$ has a unique fixed point $\eta_S \in C(\mathbb{R}_+, Q)$. Thus, $(\eta_S, \sigma_{\eta_S}, u_{\eta_S})$ is the unique solution to problem \ref{problempw}.
		
	\end{proof}

\end{document}